\newcommand{\sC}{\mathscr C}
\newcommand{\NN}{\mathbf{N}}
\newcommand{\ZZ}{\mathbf{Z}}
\newcommand{\SL}{\mathbf{SL}}
\newcommand{\St}{\mathbf{St}}
\newcommand{\gr}{\mathfrak g}
\newcommand{\se}{\subseteq}
\newcommand{\fhi}{\varphi}
\newcommand{\lra}{\longrightarrow}
\newcommand{\inv}{^{-1}}
\newcommand{\Heis}{\mathrm{Heis}}
\newcommand{\Hig}{\mathrm{Hig}}
\theoremstyle{plain}
\newtheorem{thm}{Theorem}
\newtheorem*{thm*}{Theorem}
\newtheorem{lem}[thm]{Lemma}
\newtheorem{prop}[thm]{Proposition}
\theoremstyle{definition}
\newtheorem*{defn*}{Definition}
\newtheorem*{example*}{Example}
\newtheorem*{rem*}{Remark}
\begin{document}

\title[Variations on a theme by Higman]{Variations on a theme by Higman}

\begin{abstract}
We propose elementary and explicit presentations of groups that have no amenable quotients and yet are SQ-universal. Examples include groups with a finite $K(\pi, 1)$, no Kazhdan subgroups and no Haagerup quotients.
\end{abstract}
\author{Nicolas Monod}
\address{EPFL, 1015 Lausanne, Switzerland}
%
%
\date{April 2016}
\maketitle

\section{Introduction}
In 1951, G.~Higman defined the group
\begin{equation}\label{eq:Hig}
\Hig_n = \big\langle a_i \ (i\in \ZZ/n\ZZ) : [a_{i-1}, a_i]= a_i \big\rangle
\end{equation}
and proved that for $n\geq 4$ it is infinite without non-trivial finite quotient~\cite{Higman51}. Since the presentation~\eqref{eq:Hig} is explicit and simple, A.~Thom suggested that $\Hig_n$ is a good candidate to contradict approximation properties for groups and proved such a result in~\cite{Thom_higman}. Perhaps the most elusive approximation property is still \emph{soficity}~\cite{Gromov_ax,Weiss00}; but a non-sofic group would in particular not be residually \emph{amenable}, a statement we do not know for the Higman groups (cf.\ also~\cite{Helfgott-Juschenko_arx}). The purpose of this note is to propound variations of Higman's construction with no non-trivial amenable quotients at all.

\bigskip
There are several known sources of groups without amenable quotients since it suffices to take a (non-amenable) \emph{simple} group to avoid all possible quotients. However, as Thelonius Sphere Monk observed, \emph{simple ain't easy}. To wit, one had to wait until the break-through of Burger--Mozes~\cite{Burger-Mozes0,Burger-Mozes2} for simple groups of \emph{type~F}, i.e.\ admitting a finite $K(\pi, 1)$. Before this, no torsion-free finitely presented simple groups were known.

\bigskip
The examples below are of a completely opposite nature because they admit a wealth of quotients: indeed, like $\Hig_n$, they are \emph{SQ-universal}, i.e.\ contain any countable group in a suitable quotient. It follows that they have uncountably many quotients~\cite[\S{III}]{Neumann37}, despite having no amenable quotients.

We shall start with the easiest examples, whose cyclic structure is directly inspired by~\eqref{eq:Hig}. Below that, we propose a cleaner construction, starting from copies of $\ZZ$ only, which might be a better candidate to contradict approximation properties; the price to pay is to replace the cycle by a more complicated graph.

\bigskip\noindent
\textbf{Disclaimer.}
No claim is made to produce the first examples of groups with a hodgepodge of sundry properties (for instance, if $G$ is a Burger--Mozes group, then $G*G$ satisfies many properties of $G_n$ in Theorem~\ref{thm:G} below, though with ``amenable'' instead of ``Haagerup'').\\ Our goal is to suggest transparent presentations for which the stated properties are explicit and their proofs effective.

\bigskip
\noindent
\textbf{1.A. Starting from large groups.}
Given a group $K$, an element $x\in K$ and a positive integer $n$, we define the group
\begin{equation*}
K^{(n,x)} = \big\langle K_i \ (i\in \ZZ/n\ZZ) : [x_{i-1}, x_i]= x_i \big\rangle,
\end{equation*}
where $K_i, x_i$ denote $n$ independent copies of $K,x$. Thus, $K^{(n,e)}=K^{*n}$ and $\Hig_n = \ZZ^{(n,1)}$.

We recall that a group is \emph{normally generated} by a subset if no proper normal subgroup contains that subset. Following the ideas of Higman and Schupp, we obtain:

\begin{prop}\label{prop:var}
Let $K$ be a group normally generated by an element $x$ of infinite order and let $n\geq 4$.

\begin{enumerate}[label=(\roman*)]
\item If $K$ has no infinite amenable quotient (e.g.\ if $K$ is Kazhdan), then $K^{(n,x)}$ has no non-trivial amenable quotient.\label{pt:var:no}
\item If $K$ is finitely presented, torsion-free, type~F$_\infty$, or type~F, then $K^{(n,x)}$ has the corresponding property.\label{pt:var:fin}
\item Every countable group embeds into some quotient of $K^{(n,x)}$.\label{pt:var:SQ}
\end{enumerate}
\end{prop}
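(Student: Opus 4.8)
The plan is to treat the three assertions independently. Write $G=K^{(n,x)}$ and $F=K_0*\dots*K_{n-1}$, so that $G$ is the quotient of $F$ by the normal closure of the $n$ cyclic words $R_i=x_{i-1}x_ix_{i-1}^{-1}x_i^{-2}$ ($i\in\ZZ/n\ZZ$), having rewritten the relator $[x_{i-1},x_i]x_i^{-1}$ as $x_{i-1}x_ix_{i-1}^{-1}=x_i^2$. Assertion \ref{pt:var:no} will relativise Higman's original argument, while \ref{pt:var:fin} and \ref{pt:var:SQ} will follow, along the lines of Schupp, from the fact that for $n\ge 4$ the $R_i$ form a small cancellation system over $F$.

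For \ref{pt:var:no}, I would take a quotient $\pi\colon G\twoheadrightarrow Q$ with $Q$ amenable and show that $Q$ is trivial. Since $\pi(K_i)$ is an amenable quotient of $K\cong K_i$, the hypothesis makes it finite, so each $\pi(x_i)$ has finite order $r_i$. If some $r_i=1$, then $\pi(K_i)$ is a finite group normally generated by $\pi(x_i)=e$, hence trivial; substituting $\pi(x_i)=e$ in the relation $x_ix_{i+1}x_i^{-1}=x_{i+1}^2$ forces $\pi(x_{i+1})=\pi(x_{i+1})^2$, so $\pi(x_{i+1})=e$, and cycling through all indices kills every $\pi(K_j)$, whence $Q=e$. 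If instead every $r_i\ge 2$, then conjugating $r_{i-1}$ times in $x_{i-1}x_ix_{i-1}^{-1}=x_i^2$ gives $\pi(x_i)=\pi(x_i)^{2^{r_{i-1}}}$, so $r_i\mid 2^{r_{i-1}}-1$; in particular every $r_i$ is odd, and if $p_i$ denotes its least prime divisor, the multiplicative order of $2$ modulo $p_i$ divides $r_{i-1}$ and lies strictly between $1$ and $p_i$, so $p_{i-1}<p_i$ for every $i\in\ZZ/n\ZZ$ — impossible. This settles \ref{pt:var:no} for all $n$; the hypotheses $n\ge 4$ and $\mathrm{ord}(x)=\infty$ enter only to make $G$ nontrivial and for the remaining two parts.

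For \ref{pt:var:fin} and \ref{pt:var:SQ}, I would first note that since $x$ has infinite order each $x_i$ generates an infinite cyclic subgroup of $K_i$, so the $R_i$ are cyclically reduced of syllable length $4$ over $F$, the syllables $x_i^{\pm2}$ occur in a single relator, and $R_i$ meets $R_j$ in a common free factor only when $i\equiv j$ or $i\equiv j\pm1\pmod n$; for $n\ge 4$ these features yield the small cancellation condition required by Schupp's theory over free products — the bound on $n$ being exactly what keeps the $R_i$ from closing up into too short a cycle. Granting this, \ref{pt:var:fin} follows because the presentation is then diagrammatically aspherical, so a $K(\pi,1)$ for $G$ is obtained from $\bigvee_i K(K_i,1)$ by attaching $n$ two-cells: finite presentability, type $\mathrm{F}_\infty$ and type $\mathrm{F}$ thus pass from $K$ to $G$, and torsion-freeness passes as well since the $R_i$ are not proper powers, so any torsion element of $G$ must be conjugate into some $K_i$. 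Assertion \ref{pt:var:SQ} is then the SQ-universality of small cancellation quotients of free products: given a countable group $C$, one adjoins to $G$ further relations — produced from Higman--Neumann--Neumann embeddings and the free subgroups supplied by the small cancellation geometry — that create a copy of $C$ while preserving the small cancellation condition, so that $C$ embeds into the resulting quotient of $G$.

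Alternatively one may build $G$ by hand: iterating amalgamated products over $\ZZ$ with copies of $\mathrm{BS}(1,2)=\langle t,b\mid tbt^{-1}=b^2\rangle$ — gluing in a copy along $\langle x_{i-1}\rangle=\langle t\rangle$ and along $\langle b\rangle=\langle x_i\rangle$ realises $[x_{i-1},x_i]=x_i$ — produces, after $n-1$ of the relations, a finite tree of groups $T$ inheriting the properties of \ref{pt:var:fin} from $K$, inside which $\langle x_{n-1},x_0\rangle$ is free of rank $2$; then $G$ is $T$ with a single two-cell attached along $w=x_{n-1}x_0x_{n-1}^{-1}x_0^{-2}$, which is not a proper power. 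The main obstacle, on either route, is the asphericity of this last step. Because $T$ need not be locally indicable — $K$ may be Kazhdan, hence without infinite cyclic quotient — Howie's one-relator theorem is unavailable, so one must instead verify, for $n\ge 4$, that $w$ (equivalently the full system $R_1,\dots,R_n$) satisfies the relevant small cancellation condition over $T$ and then invoke the matching asphericity, torsion and SQ-universality theorems. Pinning down precisely which small cancellation condition is in force — and checking that type $\mathrm{F}$, not merely finite presentability, really is inherited — is the technical heart of the argument.
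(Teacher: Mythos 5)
Part~\ref{pt:var:no} of your proposal is correct and is essentially the paper's argument: you have simply inlined Higman's circular lemma (including the easy case where some $r_i=1$), after observing that the image of each $K_i$ in an amenable quotient is a finite quotient of $K$. The gap is in~\ref{pt:var:fin} and~\ref{pt:var:SQ}. Everything there rests on the assertion that ``for $n\ge 4$ the $R_i$ form a small cancellation system over $F=K_0*\cdots*K_{n-1}$'', but you never identify the condition, and the standard metric one fails: in free-product small cancellation the single syllable $x_i$ is already a piece (it occurs in both $R_i$ and $R_{i+1}$), while each relator has syllable length only $4$, so $C'(1/6)$ --- the hypothesis behind the asphericity, torsion and SQ-universality theorems you propose to invoke --- is violated for every $n$; enlarging $n$ only removes interactions between distant relators and does nothing about the pieces coming from consecutive ones. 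Your fallback ``by hand'' route has the same hole in a different place: after building the chain $T$ of $n-1$ Baumslag--Solitar amalgamations you attach the last relator as a single $2$-cell, and, as you yourself concede, you then have no tool to prove asphericity (or even that $T$ embeds in the quotient), since $T$ need not be locally indicable. So the ``technical heart'' you point to is genuinely missing, and with it parts~\ref{pt:var:fin} and~\ref{pt:var:SQ}.

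The paper's point is that this heart can be removed rather than supplied: no relator is ever attached as a $2$-cell. Following Higman, one rewrites each local relation via the HNN extension $L=\langle K, h : [h,x]=x\rangle$ (legitimate because $x$ has infinite order), cuts the $n$-cycle into the two arcs $H=\langle L_0,L_1 : x_0=h_1\rangle$ and $H'=\langle L_2,\dots,L_{n-1} : x_2=h_3,\dots,x_{n-2}=h_{n-1}\rangle$ --- this is where $n\ge 4$ enters --- checks that $\{h_0,x_1\}$ and $\{h_2,x_{n-1}\}$ are bases of free subgroups of rank~$2$, and recovers $K^{(n,x)}$ as the amalgam of $H$ and $H'$ over these copies of $F_2$: the last \emph{two} relations are encoded in the gluing isomorphism instead of being imposed as relators. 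From this decomposition part~\ref{pt:var:fin} is formally inherited (all edge groups are $\ZZ$ or $F_2$, which are torsion-free of type~F), and part~\ref{pt:var:SQ} is Schupp's blocking-pair theorem for amalgamated products, with the explicit blocking element $t=x_0\inv x_1 h_0 x_1\inv x_0$. If you want to rescue your outline, the statement to prove is this amalgam decomposition, not a small cancellation condition over the free product.
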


\begin{rem*}
Suppose that $\sC$ is any class of groups closed under taking subgroups. The proof of~\ref{pt:var:no} shows: if every quotient of $K$ in $\sC$ is finite, then $K^{(n,x)}$ has no non-trivial quotient in $\sC$. For instance, if $K$ is Kazhdan, then $K^{(n,x)}$ has no non-trivial  quotient with the Haagerup property.
\end{rem*}

\begin{example*}
  The group $K=\SL_d(\ZZ)$ is an infinite, finitely presented (even type F$_\infty$) Kazhdan group for all $d\geq 3$ and the Steinberg relations show that it is normally generated by any elementary matrix (with coefficient~$1$). Alternatively, the Steinberg group itself $K=\St_d(\ZZ)$ has the same properties (it is Kazhdan because it is a finite extension of $\SL_d(\ZZ)$, see e.g.~\cite[10.1]{MilnorK}). This gives us the following presentations of SQ-universal type F$_\infty$ groups without Haagerup quotients:
$$S_{d,n} = \Bigg\langle  E_i^{p,q}\ (i\in \ZZ/n\ZZ, 1\leq p\neq q \leq d) : \begin{array}[c]{l}
{[E_i^{p,q}, E_i^{q,r}] = E_i^{p,r}}\ (p\neq r \neq q)\\
{[E_i^{p,q}, E_i^{r,s}]=e}\ (q\neq r, p\neq s\neq r)\\
{[E_{i-1}^{1,2}, E_i^{1,2}]=E_i^{1,2}}
\end{array}\Bigg\rangle.$$
The choice of the pair $(1,2)$ is arbitrary and any other elementary matrix for $x$ gives an isomorphic group. If we use the Magnus--Nielsen presentation~\cite{Magnus35,Steinberg85s} of $\SL_d(\ZZ)$ instead of the Steinberg group, we have to add the relations $(E_i^{1,2} (E_i^{2,1})\inv E_i^{1,2})^4=e$.

These groups are not, however, torsion-free. Although congruence subgroups of $\SL_d(\ZZ)$ are torsion-free (and even type~F by~\cite{Raghunathan68}), the latter are never normally generated by a single element because they have large abelianizations.

This construction can be transposed to other Chevalley groups.
\end{example*}

Notice that if in addition $K$ is \emph{just infinite}, like for instance $K=\SL_d(\ZZ)$ for $d$ odd~\cite{Mennicke65}, then this construction shows that $K$ embeds into all non-trivial quotients of $K^{(n,x)}$, such as for instance the simple quotients obtained from maximal normal subgroups.

\bigskip
\noindent
\textbf{1.B. An example built from $\ZZ$.}
Consider the semi-direct product
$$L = (\ZZ[1/2])^2 \rtimes (\ZZ \times F_2)$$
where the generator $h$ of $\ZZ$ acts on $(\ZZ[1/2])^2$ by multiplication by~$2$, and the generators $u,v$ of the free group $F_2$ act by multiplication by
$$\begin{pmatrix}1&1\\ 0&1\end{pmatrix}\ \text{ and } \ \begin{pmatrix}1&0\\ 1&1\end{pmatrix}$$
respectively. In particular the group $L$ is torsion-free, linear and finitely presented. It is generated by $\{x,y,h,u,v\}$ where $(x,y)$ is the standard basis of $\ZZ^2$.

\smallskip
We define a group $G_n$ by fusing together $n$ copies $L_i$ of $L$ in a circular fashion along the corresponding generators as follows:
\begin{equation}\label{eq:G}
G_n=\big\langle L_i : (h_i, u_i, v_i) = (y_{i-2}, x_{i-2}, y_{i-1} ), i\in \ZZ/n\ZZ\big\rangle.
\end{equation}
It is easy to write down an explicit presentation of $G_n$. Observe first that $L$, with our choice of generators, has a presentation with the following set~$R$ of relations
$$R(x,y,h,u,v) :
\begin{array}[t]{l}
{e=[x,y]=[x,u]=[y,v]=[h,u]=[h,v],}\\
{[h,x]=[u,y]=x, \ [h,y]=[v,x]=y.}
\end{array}$$
Now~\eqref{eq:G} is equivalent to the finite presentation
\begin{equation}\label{eq:Gn}
G_n=\big\langle x_i, y_i : R(x_i, y_i, y_{i-2}, x_{i-2}, y_{i-1}), i\in \ZZ/n\ZZ\big\rangle.
\end{equation}
We find these groups more elementary than~$K^{(n,x)}$ (with Kazhdan $K$) and hope that they will be easier to use in applications. In return, we have to work more than before to deduce some of the following properties.

\begin{thm}\label{thm:G}
Let $n\geq 8$.

\begin{enumerate}[label=(\roman*)]
\item The group $G_n$ has no non-trivial Haagerup quotient.\label{pt:G:no}
\item Any quotient with a $\frac{1}{36}$-F{\o}lner set for the generators $x_i, y_i$ is trivial.\label{pt:G:Folner}
\item The only Kazhdan subgroup of $G_n$ is the trivial group.\label{pt:G:noT}
\item The group $G_n$ admits a finite $K(\pi, 1)$.\label{pt:G:fin}
\item The group $G_n$ can be constructed starting from copies of $\ZZ$, using amalgamated free products, semi-direct products and HNN-extensions.\label{pt:G:Z}
\item Every countable group embeds into some quotient of $G_n$ if $n\geq 9$.\label{pt:G:SQ}
\item The groups $G_m$ are trivial for $m\leq 4$ and $m=6$.\label{pt:G:trivial}
\end{enumerate}
\end{thm}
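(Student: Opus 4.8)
The plan is to treat~\ref{pt:G:Z} and~\ref{pt:G:fin} first (these are essentially bookkeeping), then~\ref{pt:G:noT}, then the pair~\ref{pt:G:no}--\ref{pt:G:Folner}, which share an engine, and finally~\ref{pt:G:SQ} and~\ref{pt:G:trivial}.

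For~\ref{pt:G:Z} I would first build $L$ out of copies of $\ZZ$: start from $\ZZ^2$ (a direct product), form $(\ZZ[1/2])^2\rtimes\ZZ$ as the ascending HNN extension of $\ZZ^2$ along the monomorphism ``multiplication by~$2$'', adjoin the two commuting unipotent automorphisms $u$ and $v$ one at a time (two HNN extensions, i.e.\ semi-direct products with $\ZZ$), and recombine them through the identity $N\rtimes(A*B)=(N\rtimes A)*_N(N\rtimes B)$ to present $L$ as an amalgamated free product over $(\ZZ[1/2])^2\rtimes\ZZ$. One then assembles $G_n$ from the copies $L_i$ according to~\eqref{eq:G}; since the identification $(h_i,u_i,v_i)=(y_{i-2},x_{i-2},y_{i-1})$ couples three generators spread over the two neighbours $L_{i-1}$ and $L_{i-2}$, the gluing must be carried out in stages (for instance an amalgamated product along the single subgroup $\langle v_i\rangle=\langle y_{i-1}\rangle\cong\ZZ$, followed by a further step enforcing $h_i=y_{i-2}$ and $u_i=x_{i-2}$), each stage being an amalgamated product or an HNN extension over a subgroup that is free abelian or free. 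Since type~F (and likewise torsion-freeness, finite presentability, type~F$_\infty$) is inherited under amalgamated products and HNN extensions over type~F subgroups and under semi-direct products with type~F groups, and $\ZZ$ is of type~F, this decomposition also yields~\ref{pt:G:fin}. For~\ref{pt:G:noT} I would invoke the same decomposition and the stability of the class ``has no non-trivial Kazhdan subgroup'' under these operations: a Kazhdan subgroup has property~FA, so under an amalgamated product or HNN extension it is conjugate into a vertex group, while under a semi-direct product $A\rtimes\ZZ$ or $A\rtimes F_2$ it maps trivially to the torsion-free --- hence Kazhdan-subgroup-free --- factor $\ZZ$ or $F_2$ and therefore lies in $A$; as $\ZZ$ has no non-trivial Kazhdan subgroup, neither does $G_n$.

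For~\ref{pt:G:no} and~\ref{pt:G:Folner}, the key point is that for each $i$ the subgroup $H_i=\langle x_i,y_i,x_{i-2},y_{i-1}\rangle$ of $G_n$ is a quotient of $\ZZ^2\rtimes F_2$, with $\ZZ^2=\langle x_i,y_i\rangle$ and $F_2=\langle x_{i-2},y_{i-1}\rangle$ acting through its surjection onto $\SL_2(\ZZ)$ by the two elementary unipotents --- indeed the relations of $L_i$ read exactly $x_{i-2}x_ix_{i-2}\inv=x_i$, $x_{i-2}y_ix_{i-2}\inv=x_iy_i$, $y_{i-1}x_iy_{i-1}\inv=x_iy_i$ and $y_{i-1}y_iy_{i-1}\inv=y_i$. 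Since $(\ZZ^2\rtimes\SL_2(\ZZ),\ZZ^2)$ has relative property~(T) --- whence so does $(\ZZ^2\rtimes F_2,\ZZ^2)$, the $F_2$-action factoring through $\SL_2(\ZZ)$ --- the pair $(H_i,\langle x_i,y_i\rangle)$ and its image in any quotient $Q$ of $G_n$ have relative property~(T). One checks that $G_n$ is normally generated by $x_0$ --- from $x_i=[x_{i-2},y_i]$, $y_i=[y_{i-1},x_i]$ and $x_i=[y_{i-2},x_i]$ the normal closure of any one $x_i$ or $y_i$ contains every generator --- so $Q$ is trivial as soon as a single generator dies in it. Write $N_i$ for the image of $\langle x_i,y_i\rangle$ in $Q$. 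If $Q$ is Haagerup, then $H_i$ maps to a Haagerup group, which forbids the relative property~(T) pair $(H_i,N_i)$ from having $N_i$ infinite. If instead $Q$ carries a $\tfrac1{36}$-F{\o}lner set for the $x_i,y_i$, the normalized indicator function of that set is a unit vector in $\ell^2(Q)$ almost invariant under the generators of every $H_i$ --- with displacement below the relevant relative Kazhdan constant, which is where $\tfrac1{36}$ enters --- hence close to an $N_i$-invariant vector, impossible when $N_i$ is infinite. Either way $N_i$ is finite, of some exponent $M_i$. Conjugation by the image of $y_{i-2}$ multiplies by $2$ on the finite abelian group $N_i$, so $2$ is invertible modulo $M_i$ (hence $M_i$ is odd) and its multiplicative order $\mathrm{ord}_{M_i}(2)$ divides the order of the image of $y_{i-2}$, which lies in $N_{i-2}$ and therefore divides $M_{i-2}$. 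Were some $M_i>1$, an odd prime $p\mid M_i$ would give $\mathrm{ord}_p(2)\mid M_{i-2}$, and since every prime factor of $\mathrm{ord}_p(2)$ is $<p$, $M_{i-2}$ would acquire a strictly smaller odd prime factor; iterating around the $n$-cycle of the shift $i\mapsto i-2$ produces an infinite strictly decreasing sequence of primes, absurd. Hence all $M_i=1$, every generator dies, and $Q$ is trivial.

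Part~\ref{pt:G:SQ} I would obtain along the small-cancellation route of Higman and Schupp: exhibit $G_n$, or a suitable quotient, as a quotient of a free product satisfying a small-cancellation condition over free products, so that Schupp's embedding theorem applies, the stronger hypothesis $n\ge9$ securing the required metric inequality. For~\ref{pt:G:trivial}, a direct computation with~\eqref{eq:Gn} suffices: restricting the sub-family of relations $y_{i-2}y_iy_{i-2}\inv=y_i^2$ to a single orbit of the shift $i\mapsto i-2$ on $\ZZ/m$ presents a quotient of $\Hig_k$, where $k$ is the length of that orbit; these orbits have length at most $3$ precisely for $m\in\{1,2,3,4,6\}$, and $\Hig_1,\Hig_2,\Hig_3$ are visibly trivial, so all $y_i=e$, whence all $x_i=e$ by $y_{i-2}x_iy_{i-2}\inv=x_i^2$, and $G_m$ is trivial. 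The step I expect to be hardest is the quantitative input for~\ref{pt:G:Folner}: pinning down a relative Kazhdan constant for $(\ZZ^2\rtimes\SL_2(\ZZ),\ZZ^2)$, with respect to the natural four generators, sharp enough to deliver the value $\tfrac1{36}$; the staged gluing of~\ref{pt:G:Z} and the small-cancellation verification of~\ref{pt:G:SQ} are the remaining places calling for genuine, if routine, work.
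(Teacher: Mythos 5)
The central gap is in part~\ref{pt:G:Z}, which is where the real work of the paper lies and on which your~\ref{pt:G:fin}, \ref{pt:G:noT} and~\ref{pt:G:SQ} all depend. Your ``staged gluing'' is asserted rather than proved: after amalgamating over $\langle v_i\rangle=\langle y_{i-1}\rangle$, the ``further step enforcing $h_i=y_{i-2}$ and $u_i=x_{i-2}$'' is, as written, the imposition of extra relations on an already-built group, which is neither an amalgamated product nor an HNN extension. To realize it as an amalgam you must show that the subgroup of the partially assembled chain generated by $y_{i-2},x_{i-2},y_{i-1}$ (which straddles two earlier copies) is isomorphic to $\langle h_i,u_i,v_i\rangle\cong\ZZ\times F_2$ under the prescribed matching, and --- the delicate point --- that when the cycle closes, the two ends of the chain meet in a \emph{free product} of the two amalgamated subgroups. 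This is exactly where $n\geq 8$ is used, and your sketch never says where that hypothesis enters. The paper does this by enlarging $L$ to $J=L*_{\langle h,v\rangle}\Heis(h,z,v)$ so that all identifications involve only consecutive indices, proving that $Q=\langle v,x,y,z\rangle_J$ and $T=\langle h,z,v,u\rangle_J$ are isomorphic amalgams with $Q\cap T=\langle z,v\rangle_J$, assembling the chain $D_r$ as an iterated amalgam, showing that $T_0$ and $Q_r$ generate $T_0*Q_r$ when $r\geq 3$, and finally closing the circle by amalgamating $D_{n-5}$ with a copy of $D_3$ over $T_0*Q_{n-5}\cong Q_{n-1}*T_{n-4}$. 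Without this (or an equivalent analysis), \ref{pt:G:Z}, \ref{pt:G:fin}, \ref{pt:G:noT} are unproved; likewise~\ref{pt:G:SQ}, where your one-sentence appeal to small cancellation/Schupp identifies no free-product or amalgam structure, no blocking pair, and no role for $n\geq 9$ --- the paper extracts SQ-universality from the same decomposition via the explicit blocking pair given by powers of $u_2x_2$, which generates a free factor $T_0*\langle u_2x_2\rangle*Q_r$ in $D_r$ for $r=n-5\geq 4$.

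The second gap is the quantitative one in~\ref{pt:G:Folner}, which you flag but do not fill: since the statement concerns the specific constant $\frac1{36}$, an explicit relative Kazhdan pair is indispensable. The paper uses Burger's pair $(S_0,\epsilon_0)$ with $\epsilon_0=\sqrt{2-\sqrt3}$ for $(\ZZ^2\rtimes\SL_2(\ZZ),\ZZ^2)$, observes $S_0\se\overline{S}^3$ for $S=\{x,y,u,v\}$ so that $(S,\epsilon_0/3)$-invariance suffices, and concludes because $\epsilon_0/3>1/6=\sqrt{1/36}$. By contrast, your treatment of~\ref{pt:G:no} is sound and close to the paper's: relative property~(T) makes each $N_i$ finite, and your descent on the exponents $M_i$ (conjugation by $y_{i-2}$ acts as doubling, the order of $2$ modulo an odd prime forces a smaller odd prime around the cycle) is a correct re-derivation of Higman's circular argument, which the paper instead invokes via the homomorphism $\Hig_n\to G_n$, $a_i\mapsto y_{2i}$, and Lemma~\ref{lem:Hig}; your verification that any single $x_i$ or $y_i$ normally generates $G_n$ and your proof of~\ref{pt:G:trivial} via the orbits of $i\mapsto i-2$ are also correct, the latter matching the paper's use of $\Hig_{n/2}$ and Lemma~\ref{lem:autothysis}.
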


\noindent The restriction $n\geq 9$ is probably not needed in~\ref{pt:G:SQ} but makes it very easy to check Schupp's criterion for SQ-universality. We have not elucidated $G_5$ and $G_7$.

\bigskip

\noindent
\textbf{Scholium.}
We should like to point out a general type of presentations subsuming the examples above. Consider a group $L$ and two finite sets $A, P\se L$. We think of elements in $A$ as ``active'', whilst those in $P$ are ``passive''. Consider furthermore a transitive labelled oriented graph $\gr$ whose edges are labelled by $P\times A$. To every vertex $i$ of $\gr$ we associate an independent copy $L_i$ of $L$. We then form the group
$$G =\big\langle L_i, i\in \gr : p_j = a_k \text{ if $\exists$ $(p,a)$-labelled edge from $j$ to $k$} \big\rangle.$$
In order to get a manageable group from this presentation, we would like to ensure at the very least that each $L_i$ embeds. A favourable case is when $A$ is a basis for a free subgroup in $L$ and the edges spread the passive elements of $P_j$ incoming to a vertex $k$ over copies $L_j$ for suitably distinct $j$s. (In our case, we allowed a commutation in $A_k$ because it was going to hold also among the corresponding $P_j$.)

The trade-off is that this spreading  should remain limited compared to the girth of the cycles in $\gr$ along which we can cut the amalgamation scheme. Higman's groups and the groups $K^{(n,x)}$ use a simple $n$-cycle for~$\gr$; as for $G_n$, we depict its graph in the figure below for $n=8$; the orientation is implicit from the labelling.

\begin{figure}[h!]
%
%
%

\begin{tikzpicture}[scale=0.75]
\def \num {8}
\def \n {7}  
\def \radius {5cm}
\def \oradius {5.2cm} 
\def \oyradius {5.4cm} 
\def \iradius {4.8cm} 
\def \ixradius {4.5cm} 
\def \iyradius {4.9cm} 
\def \margin {5.2}
\def \omargin {4.4}
\def \oymargin {6}
\def \imargin {4.5}
\def \ixmargin {5}
\def \iymargin {12}
\def \start {-0.5} 

\foreach \s in {0,...,\n }
{
\node[draw, circle, fill=blue!10] at ({360/\num * (\s + \start)}:\radius) {$\s$};

\draw ({360/\num * (\s + \start)+\omargin}:\oradius) arc ({360/\num * (\s +\start)+\omargin}:{360/\num * (\s+1+\start)-\omargin}:\oradius);

\draw ({360/\num * (\s + \start)+\margin}:\radius) -- ({360/\num * (\s + 2 + \start)-\margin}:\radius);
\draw ({360/\num * (\s + \start)+\imargin}:\iradius) -- ({360/\num * (\s + 2 + \start)-\imargin}:\iradius);

\node at ({360/\num * (\s + \start)+\oymargin}:\oyradius) {$y$};
\node at ({360/\num * (\s + \start)+\iymargin}:\iyradius) {$y$};
\node at ({360/\num * (\s + \start)+\ixmargin}:\ixradius) {$x$};

\node at ({360/\num * (\s + \start)-\oymargin}:\oyradius) {$v$};
\node at ({360/\num * (\s + \start)-\iymargin}:\iyradius) {$h$};
\node at ({360/\num * (\s + \start)-\ixmargin}:\ixradius) {$u$};
}

\node at (0,0) {The graph $\gr$ for $G_8$.};

\end{tikzpicture}
\end{figure}

\bigskip\noindent
\textbf{Notation.}
Our convention for commutators is $[\alpha, \beta] = \alpha \beta \alpha\inv \beta\inv$; Higman used a different convention for~\eqref{eq:Hig} but this does not affect the group~$\Hig_n$. Given a subset $E$ of a group $H$, we denote the subgroup it generates by $\langle E \rangle$ or by $\langle E \rangle_H$ when $H$ needs to be clarified.

\section{Proof of Proposition~\ref{prop:var}}
This proposition really is just a variation on the work of Higman and Schupp. For~\ref{pt:var:no}, we start by recalling the following.

\begin{lem}[Higman's circular argument]\label{lem:Hig}
Let $f$ be a homomorphism from $\Hig_n$ to another group. If $f(a_i)$ has finite order for some $i$, then $f$ is trivial.
\end{lem}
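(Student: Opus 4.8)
The plan is to exploit the cyclic cascade of squaring relations in \eqref{eq:Hig}. With our convention $[\alpha,\beta]=\alpha\beta\alpha\inv\beta\inv$, the defining relation reads $a_{i-1}a_i a_{i-1}\inv = a_i^2$, and an immediate induction on $k$ gives $a_{i-1}^k a_i a_{i-1}^{-k}=a_i^{2^k}$ for all $k\geq 0$ and all $i\in\ZZ/n\ZZ$. This is the only computation needed, and it is routine.

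The key observation is then the following implication: if $f(a_{i-1})$ has finite order $m_{i-1}$, then applying $f$ to the identity above with $k=m_{i-1}$ yields $f(a_i)=f(a_i)^{2^{m_{i-1}}}$, hence $f(a_i)^{2^{m_{i-1}}-1}=e$. So $f(a_i)$ also has finite order, say $m_i$, and $m_i$ divides the odd number $2^{m_{i-1}}-1$; in particular $m_i$ is odd. Starting from the hypothesis that $f(a_{i_0})$ has finite order for some $i_0$ and propagating this implication once around the cycle $\ZZ/n\ZZ$, I would conclude that every $f(a_i)$ has finite order $m_i$ and that $m_i \mid 2^{m_{i-1}}-1$ holds for all $i$ simultaneously.

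It remains to upgrade ``finite order'' to ``trivial'', and here I would argue by contradiction with a minimal-prime descent. Suppose some $m_i>1$ and let $p$ be the smallest prime dividing $m_0 m_1\cdots m_{n-1}$, say $p\mid m_j$. From $p\mid m_j \mid 2^{m_{j-1}}-1$ we get that $p$ is odd and $2^{m_{j-1}}\equiv 1 \pmod p$. Let $d$ be the multiplicative order of $2$ modulo $p$; then $d\mid m_{j-1}$, while $d\mid p-1$ by Fermat, and $d>1$ since $p\geq 3$. Any prime $q\mid d$ therefore satisfies $q\mid m_{j-1}$ and $q\leq d\mid p-1$, hence $q<p$, contradicting the minimality of $p$. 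Thus all $m_i=1$, i.e.\ $f(a_i)=e$ for every $i$, and since the $a_i$ generate $\Hig_n$ the homomorphism $f$ is trivial.

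The cascade and its iteration are formal; the one genuinely non-formal ingredient — and the step I would present most carefully — is the elementary number-theoretic fact that a prime $p$ dividing $2^m-1$ forces a prime divisor of $m$ to be at most $p-1$, which is exactly what makes the minimal-prime descent terminate.
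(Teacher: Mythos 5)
Your proof is correct and follows essentially the same route as the paper: propagate finiteness of order around the cycle via the relation $a_{i-1}a_ia_{i-1}\inv=a_i^2$, then derive a contradiction from the smallest prime dividing any of the orders, using that the multiplicative order of $2$ modulo that prime divides both the previous index's order and $p-1$. The only (harmless) difference is presentational: you make the divisibility bookkeeping $m_i\mid 2^{m_{i-1}}-1$ and the final descent to a smaller prime fully explicit, where the paper compresses these steps.
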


\begin{proof}[Proof {\upshape (see also~\cite[p.~547]{Neumann54}).}]
The relations in~\eqref{eq:Hig} imply inductively that $f(a_i)$ has finite order $r_i \geq 1$ for \emph{all} $i$. Suppose for a contradiction that $r_i>1$ for some $i$, hence for all $i$ by the relations~\eqref{eq:Hig}. Let $p$ be the smallest prime dividing any $r_j$. The relation $a_{j-1}^{r_{j-1}} a_j a_{j-1}^{-r_{j-1}} = a_j^{2^{r_{j-1}}}$ implies that $2^{r_{j-1}}-1$ is a multiple of $r_j$ and hence of $p$. In particular, $p\neq 2$ and the order $s>1$ of $2$ in $(\ZZ/p\ZZ)^\times$ divides $r_{j-1}$. This contradicts the choice of $p$ because $s\leq p-1$.
\end{proof}

Suppose now that $f$ is a homomorphism from $K^{(n,x)}$ to an amenable group. The image of $K_i$ in $K^{(n,x)}$ is mapped by $f$ to a finite group, so that in particular $f(x_i)$ has finite order for all $i$. Since we have a homomorphism $\Hig_n\to K^{(n,x)}$ sending $a_i$ to $x_i$, we deduce from Lemma~\ref{lem:Hig} that  $f(x_i)$ is in fact trivial. Since $K$ is normally generated by $x$, it follows that $f(K_i)$ is trivial. We conclude that $f$ is trivial because the various $K_i$ generate $K^{(n,x)}$.

\bigskip
The two other points follow once we re-construct $K^{(n,x)}$ as a suitable amalgam. Recall that $x$ has infinite order; thus
\begin{equation}\label{eq:KL}
L=\langle K, h : [h,x]=x \rangle
\end{equation}
is an HNN-extension; we define $L_i, h_i$ similarly. Now
$$H = \big\langle L_0, L_1 : x_0 = h_1 \big\rangle$$
is a free product with amalgamation (because $x_0$ has infinite order) and therefore, using also the HNN-structure of~\eqref{eq:KL}, it follows that $\langle h_0, x_1 \rangle$ is a free group on $h_0, x_1$. Likewise, since $n\geq 4$, we deduce that
$$H' = \big\langle L_2, \ldots L_{n-1} : x_2 = h_3, \ldots, x_{n-2} = h_{n-1} \big\rangle$$
is a (successive) free product with amalgamation and that $h_2, x_{n-1}$ are a basis of a free group in $H'$. Therefore, we obtain $K^{(n,x)}$ by amalgamating $H$ and $H'$ over the groups $\langle h_0, x_1 \rangle$ and $\langle x_{n-1}, h_2 \rangle$ by identifying the free generators in the order given here.

Now the finiteness properties of~\ref{pt:var:fin} all follow since $K^{(n,x)}$ was obtained from copies of $K$ by finitely many HNN-extensions and amalgamated free products (see e.g.~\cite[\S7]{Geoghegan}). Regarding SQ-universality, P.~Schupp proved that it suffices to find a \emph{blocking pair} for $\langle h_0, x_1 \rangle$ in $H$, see Thm.~II in~\cite{Schupp71}. A blocking pair is provided for instance by any distinct non-trivial powers of an element $t\in H$ such that $t,  h_0, x_1$ form a basis of a free group (see the comment after Thm.~II in~\cite{Schupp71}). Just as in Lemma 4.3 of~\cite{Schupp71}, the element $t=x_0\inv x_1 h_0 x_1\inv x_0$ will do.

\section{Proof of Theorem~\ref{thm:G}}
We now turn to the groups $L$ and $G_n$ defined in part~1.B of the Introduction and fix some more notation. Denote by $\Heis(\alpha, \beta, \zeta)$ the (discrete) Heisenberg group with generators $\alpha,\beta$ and central generator $\zeta$. More precisely, it is defined by the relations $[\alpha, \beta]= \zeta$ and $[\zeta,\alpha]= [\zeta,\beta]=e$. For instance, $\{v,x,y\}$ (or just $\{v,x\}$) generate a copy of $\Heis(v,x,y)$ in $L$.

We shall use repeatedly, but tacitly, the following fundamental property of a free product with amalgamation $A*_C B$. If $A'<A$ and $B'<B$ are subgroups whose intersections with $C$ yield the same subgroup $C'<C$, then the canonical map $A' *_{C'} B' \to A*_C B$ is an embedding~\cite[8.11]{Neumann48}.

\medskip
We embed $L$ into a larger group $J$ generated by $L$ together with an additional generator $z$ by defining the following free product with amalgamation:
\begin{equation}\label{eq:J}
J = L *_{\langle h,v\rangle} \Heis(h,z,v).
\end{equation}
Although $h$ and $v$ already occur in our definition of $L$, there is no ambiguity since they form a basis of a copy of $\ZZ^2$ both in $L$ and in $\Heis(h,z,v)$. In particular, $L$ is indeed canonically embedded in $J$.

When we want to consider normal forms for this amalgamation (cf.~\cite[\S1]{Schreier27} or Thm.~4.4 in~\cite{Magnus-Karrass-Solitar}), it is convenient that there are very nice coset representatives of $\langle h,v\rangle$ in each factor. Indeed, in $\Heis(h,z,v)$, we can simply take the group $\langle z\rangle$. In $L$ written as
$$L =  (\ZZ[1/2])^2 \rtimes (\langle h \rangle \times \langle u,v \rangle ),$$
we can take as set of representatives the group $(\ZZ[1/2])^2\rtimes K$, where $K\lhd\langle u,v \rangle$ is the kernel of the morphism killing $v$.

\medskip
As before, we shall denote by $J_i$ a family of independent copies of $J$. We further denote by $z_i$ the corresponding additional generator. Then we have an equivalent presentation of $G_n$ given by
%
%
\begin{equation}\label{eq:newP}
\Bigg\langle J_i: 
\begin{array}{c}
v_{i-1} = h_{i}\\
x_{i-1} = z_{i}\\
y_{i-1} = v_{i}\\
z_{i-1} = u_{i}
\end{array}
\kern4mm\forall\, i\in \ZZ/n\ZZ \Bigg\rangle.
\end{equation}
The advantage is that each relation involves only \emph{successive} indices $i-1$ and $i$.

\medskip
We define inductively the groups $D_r$ for $r\in \NN$, starting with $D_0 = J_0$, by the presentation
%
%
$$D_r= \Bigg\langle D_{r-1} , J_r :
\begin{array}{c}
v_{r-1} = h_{r}\\
x_{r-1} = z_{r}\\
y_{r-1} = v_{r}\\
z_{r-1} = u_{r}
\end{array}
\Bigg\rangle.$$
We claim that this is in fact a free product with amalgamation of $D_{r-1}$ and $J_r$. More precisely, we claim that the subgroups of $J$ given respectively by
\begin{equation}\label{eq:QT}
Q=\langle v,x,y,z \rangle_J \ \text{ and } \ T=\langle h,z,v,u \rangle_J
\end{equation}
are isomorphic under matching their generators in the order listed in~\eqref{eq:QT}. This claim, transported to the various $J_i$, implies in particular by induction that $D_r$ is indeed a free product with amalgamation $D_r\cong D_{r-1} *_{Q_{r-1} = T_r} J_r$, where $Q_i, T_i$ denote the corresponding subgroups of $J_i$.

To prove the claim, we note first that the structure of $Q$ is revealed by observing which subgroups are generated by $\{v,x,y\}$ and by $\{v,z\}$ in the amalgamation~\eqref{eq:J} defining $J$. Both intersect $\langle h,v\rangle$ exactly in $\langle v\rangle$ and thus $Q$ is itself a free product with amalgamation $Q= \Heis(v,x,y) *_{\langle v\rangle} \langle v,z\rangle_J$ with $\langle v,z\rangle_J\cong \ZZ^2$.

As for $T$, given its relations, we have an epimorphism $Q\to T$ given by the above matching of generators; we need to show that it is in fact injective. To this end, consider that $T$ is generated by its subgroups $\Heis(h,z,v)$ and $\langle h,v,u\rangle_J$. Since $L$ is a factor of $J$, the latter is $\langle h,v,u\rangle_L \cong \ZZ\times F_2$. Thus $T$ is an amalgamated free product $\Heis(h,z,v) *_{\langle h,v \rangle} \langle h,v,u\rangle_L$. The injectivity now follows. In conclusion, $D_r$ is the following iterated free product with amalgamations:
$$D_r \cong J_0 *_{Q_0 = T_1}  J_1 *_{Q_1 = T_2} \cdots *_{Q_{r-1} = T_r}  J_r.$$
We also need to understand the intersection $Q\cap T$, which contains at least the group $\langle z,v \rangle_J \cong \ZZ^2$. In fact, this intersection is exactly $\langle z,v \rangle_J$. This follows by examining the normal form for the particularly simple choice of coset representatives made above.

As a consequence, we deduce that when $r\geq 3$, the subgroups $T_0$ and $Q_r$ of
\begin{equation}\label{eq:Dr2}
D_r \cong \big( J_0 *_{Q_0 = T_1}  J_1\big) *_{Q_1 = T_2}  \cdots  *_{Q_{r-2} = T_{r-1}}   \big( J_{r-1} *_{Q_{r-1} = T_r} J_r\big)
\end{equation}
intersect trivially and hence generate a free product  $T_0 * Q_r$. 

\medskip
Finally, to close the circle, we will use the assumption $n\geq 8$ and glue $D_{n-5}$ with a copy $D'_3$ of $D_3$ as follows. We shift indices in the $D_3$ factor to obtain the isomorphic group
$$D' _3 =  J_{n-4} *_{Q_{n-4} = T_{n-3}} \cdots *_{Q_{n-2} = T_{n-1}}  J_{n-1}.$$
In $D'_3$, the subgroups $T_{n-4}$ and $Q_{n-1}$ generate $T_{n-4}*Q_{n-1}$. Since we have constructed isomorphisms $T_0\cong Q_{n-1}$ and $Q_{n-5}\cong T_{n-4}$, we have a corresponding isomorphism
$$\fhi\colon T_0 * Q_{n-5} \lra  Q_{n-1} * T_{n-4}$$
and therefore we have a free product with amalgamation
\begin{equation}\label{eq:close}
D_{n-5} *_{\fhi} D'_3.
\end{equation}
Since this is a rewriting of the presentation~\eqref{eq:newP}, we have indeed constructed $G_n$ as an amalgam whenever $n\geq 8$. In particular, $L_i$ is embedded in $G_n$.

\medskip
At this point, we have established point~\ref{pt:G:Z} of Theorem~\ref{thm:G}, observing that $(\ZZ[1/2])^2 \rtimes \langle h \rangle$ is an HNN-extension of $\ZZ\times \ZZ$, that we can write
$$L \cong \Big( (\ZZ[1/2])^2 \rtimes \langle h \rangle\Big) \rtimes (\ZZ * \ZZ)$$
and that Heisenberg groups have the form $(\ZZ\times \ZZ) \rtimes \ZZ$.

On the other hand, point~\ref{pt:G:fin} follows from~\ref{pt:G:Z}, see e.g.~\cite[\S7]{Geoghegan}. As for~\ref{pt:G:noT}, we only need to recall that Kazhdan groups have Serre's property~FA~\cite[\S6.a]{Harpe-Valette_s}. This implies that any Kazhdan subgroup of $G_n$ can be recursively constrained into the factors of any amalgam. By~\ref{pt:G:Z}, we finally reach $\ZZ$, which has no non-trivial Kazhdan subgroup.

For~\ref{pt:G:SQ}, we indulge in the expedience of $n\geq 9$. This allows us to see from the decomposition~\eqref{eq:Dr2} applied to $r=n-5\geq 4$ that we have a free product
$$\langle T_0, u_2 x_2,  Q_r\rangle_{D_r} = T_0 * \langle u_2 x_2 \rangle *Q_r.$$
Indeed, reasoning within $J$, we see that $\langle u x \rangle$ intersects both $Q$ and $T$ trivially (and is infinite). This implies that any two distinct non-trivial powers of $u_2 x_2$ constitute a blocking pair for $T_0 *Q_r$ in $D_r$, see again~\cite{Schupp71}; we conclude that $G_n$ is SQ-universal.

\medskip
Turning to~\ref{pt:G:no}, we first observe that every generator in the presentation~\eqref{eq:Gn} functions as a self-destruct button for the group $G_n$, i.e.\ normally generates $G_n$.

\begin{lem}\label{lem:autothysis}
Let $f$ be a homomorphism from $G_n$ to another group. If $f$ sends some $x_i$ or some $y_i$ to the identity, then $f$ is trivial.
\end{lem}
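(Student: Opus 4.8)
The plan is to argue directly from the finite presentation~\eqref{eq:Gn}, using nothing beyond its defining relations; in particular neither the amalgam decomposition built above nor Higman's circular argument (Lemma~\ref{lem:Hig}) is needed. Recall that under the fusion $h_i = y_{i-2}$, $u_i = x_{i-2}$, $v_i = y_{i-1}$, two of the relations of $L_i$, namely $[u_i,y_i]=x_i$ and $[v_i,x_i]=y_i$, become relations of $G_n$:
\begin{equation*}
x_i = [x_{i-2},\, y_i] \qquad\text{and}\qquad y_i = [y_{i-1},\, x_i].
\end{equation*}
Writing $\bar g$ for $f(g)$, the first identity shows that $\bar y_i = e$ forces $\bar x_i = e$, and the second that $\bar x_i = e$ forces $\bar y_i = e$. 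Hence the hypothesis---that $\bar x_i = e$ or $\bar y_i = e$ for some $i$---already yields an index $i$ with $\bar x_i = \bar y_i = e$; call such an index \emph{dead}.

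The second step is to show that deadness propagates one step forward around the circle. The relation $[v_{i+1},x_{i+1}]=y_{i+1}$ of $L_{i+1}$, after the substitution $v_{i+1}=y_i$, reads $y_{i+1}= y_i x_{i+1} y_i^{-1} x_{i+1}^{-1}$ in $G_n$; therefore $\bar y_i = e$ implies $\bar y_{i+1}=e$, and so (by the equivalence above) the index $i+1$ is dead as soon as $i$ is. Iterating this $n$ times, every index of $\ZZ/n\ZZ$ is dead, so $\bar x_j = \bar y_j = e$ for all $j$; since $\{x_j,y_j : j\in\ZZ/n\ZZ\}$ generates $G_n$ by~\eqref{eq:Gn}, the homomorphism $f$ is trivial.

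I do not anticipate a serious obstacle; the one point requiring care is the bookkeeping in passing from the relation scheme $R(x,y,h,u,v)$ of $L$ to $R(x_i,y_i,y_{i-2},x_{i-2},y_{i-1})$ of $G_n$, and specifically in singling out a relation that moves the index by exactly one rather than by two. The commutator $[v_{i+1},x_{i+1}]=y_{i+1}$ does this because $v$ is fused to a generator of the \emph{previous} copy ($v_{i+1}=y_i$), whereas $[u_{i+2},y_{i+2}]=x_{i+2}$ (with $u_{i+2}=x_i$) would only propagate by two; the single-step version is what makes the argument close up around the cycle, and notably it uses no lower bound on $n$.
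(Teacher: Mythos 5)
Your proof is correct and takes essentially the same route as the paper: the heart of both arguments is the single-step propagation $y_{i+1}=[v_{i+1},x_{i+1}]=[y_i,x_{i+1}]$ carried once around the cycle, followed by killing the $x_j$'s. The only cosmetic difference is that you pass between $x_i$ and $y_i$ using the literal relations $[x_{i-2},y_i]=x_i$ and $[y_{i-1},x_i]=y_i$ from~\eqref{eq:Gn}, whereas the paper conjugates by $u_iv_i^{-1}u_i$; both are immediate consequences of the presentation.
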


\begin{proof}
The element $u_i v_i\inv u_i$ conjugates $x_i$ to $y_i\inv$ and therefore we can assume that $f(y_i)$ is trivial. Since $y_i=v_{i+1}$, the relation $[v_{i+1}, x_{i+1}] = y_{i+1}$ implies inductively that $f(y_j)$ vanishes for all $j$. Conjugating by $u_j v_j\inv u_j$, we find that all generators in~\eqref{eq:Gn} are trivialized by $f$.
\end{proof}

Let now $f$ be a homomorphism from $G_n$ to some Haagerup group. The subgroup $\langle x,y\rangle$ of $\langle x,y\rangle \rtimes \langle u,v\rangle$ has the relative property~(T). Indeed, the proof of the corresponding statement for $\ZZ^2\rtimes \SL_2(\ZZ)$ only depends on the image of $\mathbf{SL}_2(\ZZ)$ in the automorphism group of $\ZZ^2$, see e.g.~\cite{Burger_kazhdan}. Therefore, $f(\langle x_i, y_i\rangle)$ is finite for all $i$.

On the other hand, the presentation~\eqref{eq:G} shows that we have a morphism $\Hig_n \to G_n$ defined by $a_i \mapsto y_{2i}$. By Higman's argument (Lemma~\ref{lem:Hig}), it follows that $f(y_{2i})$ is trivial for all $i$. We conclude from Lemma~\ref{lem:autothysis} that $f$ is trivial.

\smallskip
For~\ref{pt:G:Folner}, we use the explicit \emph{relative Kazhdan pair} $(S_0, \epsilon_0)$ provided by M.~Burger, Example~2 p.~40 in~\cite{Burger_kazhdan}. Here $S_0$ is a certain generating set of $\ZZ^2\rtimes \SL_2(\ZZ)$ and $\epsilon_0 = \sqrt{2-\sqrt 3}$. Being a relative Kazhdan pair means that any unitary representation of $\ZZ^2\rtimes \SL_2(\ZZ)$ with $(S_0, \epsilon_0)$-invariant vectors admits $\ZZ^2$-invariant vectors, see~\cite{Harpe-Valette_s}. We denote by $S=\{x,y,u,v\}$ our usual generators of $\ZZ^2\rtimes F_2$ and write $\overline{S} = S \cup S\inv \cup \{e\}$; then $(S,\epsilon)$-invariance is equivalent to $(\overline{S},\epsilon)$-invariance. The set $S_0$ from~\cite[Ex.~2]{Burger_kazhdan} is contained in $\overline{S}^3$ under the map $F_2\to \SL_2(\ZZ)$ and therefore every $(S,\epsilon_0/3)$-invariant vector is $(S_0, \epsilon_0)$-invariant. Now~\ref{pt:G:Folner} follows because $\epsilon_0/3 > 1/6$ and because any $(S, \epsilon)$-F{\o}lner set gives a  $(S, \sqrt\epsilon)$-invariant vector.

\begin{rem*}
The corresponding argument provides also a lower bound on F{\o}lner constants for quotients of $K^{(n,x)}$ when $K$ is Kazhdan.
\end{rem*}

It only remains to prove~\ref{pt:G:trivial}. Consider again the homomorphism $\Hig_n \to G_n$ above. When $n$ is even, this factors through a morphism $\Hig_{n/2} \to G_n$. Since $\Hig_r$ is trivial for $r\leq 3$ (see~\cite{Higman51}), it follows that $y_0$ is trivial when $n=4,6$; now Lemma~\ref{lem:autothysis} shows that $G_n$ is trivial. The same argument applied to the original map $\Hig_n \to G_n$ takes care of $n\leq 3$.


\bibliographystyle{../BIB/amsplain}
\bibliography{../BIB/ma_bib}

\def\cprime{$'$}
\providecommand{\bysame}{\leavevmode\hbox to3em{\hrulefill}\thinspace}
\begin{thebibliography}{10}

\bibitem{Burger_kazhdan}
Marc Burger, \emph{Kazhdan constants for {${\rm SL}(3,{\bf Z})$}}, J. reine
  angew. Math. \textbf{413} (1991), 36--67.

\bibitem{Burger-Mozes0}
Marc Burger and Shahar Mozes, \emph{Finitely presented simple groups and
  products of trees}, C. R. Acad. Sci. Paris S\'er. I Math. \textbf{324}
  (1997), no.~7, 747--752.

\bibitem{Burger-Mozes2}
\bysame, \emph{Lattices in product of trees}, Inst. Hautes \'Etudes Sci. Publ.
  Math. (2000), no.~92, 151--194.

\bibitem{Harpe-Valette_s}
Pierre de~la Harpe and Alain Valette, \emph{La propri\'et\'e {$(T)$} de
  {K}azhdan pour les groupes localement compacts}, Ast\'erisque (1989),
  no.~175, 158.

\bibitem{Geoghegan}
Ross Geoghegan, \emph{Topological methods in group theory}, Graduate Texts in
  Mathematics, vol. 243, Springer, New York, 2008.

\bibitem{Gromov_ax}
Mikha{\"\i}l~Leonidovich Gromov, \emph{Endomorphisms of symbolic algebraic
  varieties}, J. Eur. Math. Soc. \textbf{1} (1999), no.~2, 109--197.

\bibitem{Helfgott-Juschenko_arx}
Harald~Andr{\'e}s Helfgott and Kate Juschenko, \emph{Soficity, short cycles and
  the {H}igman group}, Preprint, {\tt http://arxiv.org/abs/1512.02135}, 2015.

\bibitem{Higman51}
Graham Higman, \emph{A finitely generated infinite simple group}, J. London
  Math. Soc. \textbf{26} (1951), 61--64.

\bibitem{Magnus35}
Wilhelm Magnus, \emph{\"{U}ber {$n$}-dimensionale {G}ittertransformationen},
  Acta Math. \textbf{64} (1935), no.~1, 353--367.

\bibitem{Magnus-Karrass-Solitar}
Wilhelm Magnus, Abraham Karrass, and Donald~Moiseyevitch Solitar,
  \emph{Combinatorial group theory: Presentations of groups in terms of
  generators and relations}, revised ed., Dover Publications Inc., New York,
  1976.

\bibitem{Mennicke65}
Jens Mennicke, \emph{Finite factor groups of the unimodular group}, Ann. of
  Math. (2) \textbf{81} (1965), 31--37.

\bibitem{MilnorK}
John Milnor, \emph{Introduction to algebraic {$K$}-theory}, Princeton
  University Press, 1971, Annals of Mathematics Studies, No. 72.

\bibitem{Neumann37}
Bernhard~Hermann Neumann, \emph{{Some remarks on infinite groups.}}, {J. Lond.
  Math. Soc.} \textbf{12} (1937), 120--127.

\bibitem{Neumann54}
\bysame, \emph{An essay on free products of groups with amalgamations}, Philos.
  Trans. Roy. Soc. London. Ser. A. \textbf{246} (1954), 503--554.

\bibitem{Neumann48}
Hanna Neumann, \emph{Generalized free products with amalgamated subgroups},
  Amer. J. Math. \textbf{70} (1948), no.~3, 590--625.

\bibitem{Raghunathan68}
Madabusi~Santanam Raghunathan, \emph{A note on quotients of real algebraic
  groups by arithmetic subgroups}, Invent. Math. \textbf{4} (1968), 318--335.

\bibitem{Schreier27}
Otto Schreier, \emph{Die {U}ntergruppen der freien {G}ruppen}, Abhandlungen
  Math. Hamburg \textbf{5} (1927), 161--183.

\bibitem{Schupp71}
Paul~Eugene Schupp, \emph{Small cancellation theory over free products with
  amalgamation}, Math. Ann. \textbf{193} (1971), 255--264.

\bibitem{Steinberg85s}
Robert Steinberg, \emph{Some consequences of the elementary relations in {${\rm
  SL}_n$}}, Contemp. Math., vol.~45, Amer. Math. Soc., 1985, pp.~335--350.

\bibitem{Thom_higman}
Andreas Thom, \emph{About the metric approximation of {H}igman's group}, J.
  Group Theory \textbf{15} (2012), no.~2, 301--310.

\bibitem{Weiss00}
Benjamin Weiss, \emph{Sofic groups and dynamical systems}, Sankhy\=a Ser. A
  \textbf{62} (2000), no.~3, 350--359, Ergodic theory and harmonic analysis
  (Mumbai, 1999).

\end{thebibliography}

\end{document}